\theoremstyle{plain}
\newtheorem*{theorem}{Theorem}
\numberwithin{equation}{section}
\newcommand{\ra}{\rightarrow}
\begin{document}

\title {$\mathbf L^1$ completeness for Fourier series}

\date{}

\author[P.L. Robinson]{P.L. Robinson}

\address{Department of Mathematics \\ University of Florida \\ Gainesville FL 32611  USA }

\email[]{paulr@ufl.edu}

\subjclass{} \keywords{}

\begin{abstract}

We note that the Fubini theorem may be used to prove that an $L^1$ function is determined by its Fourier coefficients. 

\end{abstract}

\maketitle

\medbreak

One of the most fundamental results in the theory of Fourier series is the uniqueness theorem asserting that an $L^1$ function is essentially determined by its Fourier coefficients. Explicitly, the Fourier coefficients of $f \in L^1 [ - \pi, \pi ]$ are defined by 
$$a_n := \frac{1}{\pi} \int_{- \pi}^{\pi} f(t) \cos  n t \: {\rm d} t \; \; \; \; (n \geqslant 0)$$
$$b_n := \frac{1}{\pi} \int_{- \pi}^{\pi} f(t) \sin  n t \; {\rm d} t \; \; \; \; (n > 0)$$
\medbreak 
\noindent 
and the fundamental uniqueness theorem to which we refer states that if each of these Fourier coefficients is zero then the function $f$ itself is zero almost everywhere. 

\medbreak 

By now, this result has been established in many ways. One of the more elegant approaches involves the Ces\`aro means $\sigma_N = \frac{1}{N + 1} (s_0 + \cdots + s_N)$ of the partial sums 
$$s_N (t) = \frac{1}{2} a_0 + \sum_{n = 1}^{N} (a_n \cos n t + b_n \sin n t )$$
of the Fourier series of $f$. In 1904, Fej\'er proved that if $f$ is a continuous function then $\sigma_N \ra f$ pointwise and indeed uniformly; in 1905, Lebesgue proved that if $f$ is an $L^1$ function then $\sigma_N \ra f$ almost everywhere. The uniqueness theorem is an immediate consequence. 

\medbreak 

Our sole concern here is with another early approach to the proof of the uniqueness theorem due to Lebesgue. The details of this approach may be found in the classics [7] (pages 11-12) and [4] (pages 18-19); they may also be found in more recent texts such as [1] (pages 55-57), [3] (pages 40-41) and [2] (pages 226-228). This approach starts with the case in which $f$ is continuous and here employs an auxiliary sequence $(T_n)$ of trigonometric polynomials: [1] and [3] follow Zygmund in their choice of $T_n$; [2] follows Hardy and Rogosinski. All five texts are in substantial agreement on the continuation of the proof, in which $f$ is an $L^1$ function with vanishing Fourier coefficients and the continuous case is applied to a specific indefinite integral $F$ of $f$: integration by parts shows that the Fourier coefficients of the (absolutely) continuous function $F$ also vanish, whence $F$ is zero and therefore $f = 0$ almost everywhere. 

\medbreak 

In at least the three more recent texts, this reduction of the $L^1$ case to the continuous case rests clearly and firmly on the relationship between differentiation and integration in the Lebesgue theory. Times change. These relatively sophisticated aspects of the Lebesgue theory are now perhaps more likely to be covered in a second course, a first course being perhaps more likely to include the Fubini theorem pertaining to repeated integrals. For this reason, we suggest the following alternative proof of the uniqueness theorem (rather, of the reduction from the $L^1$ case to the continuous case). 

\begin{theorem} 
If the Fourier coefficients of $f \in L^1 [- \pi, \pi]$ all vanish then $f = 0$ almost everywhere. 
\end{theorem}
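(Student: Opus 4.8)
The plan is to follow the classical Lebesgue reduction but to replace the integration-by-parts step, which leans on the differentiation theory, by a single application of Fubini's theorem. Assuming every $a_n$ and $b_n$ vanishes, I would introduce the indefinite integral $F(x) = \int_{-\pi}^{x} f(t) \, \rd t$. This $F$ is continuous, and the hypothesis $a_0 = 0$ gives $F(-\pi) = F(\pi) = 0$, so $F$ is a genuine continuous periodic function, to which the already-established continuous case will eventually be applied.

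The heart of the argument is the computation of the Fourier coefficients of $F$ without invoking $F' = f$. First I would write $F(x) = \int_{-\pi}^{\pi} f(t) \, \mathbf{1}_{\{t \leqslant x\}} \, \rd t$ and substitute this into $\int_{-\pi}^{\pi} F(x) \cos n x \, \rd x$, producing a double integral of $f(t) \cos n x$ over the triangle $\{ -\pi \leqslant t \leqslant x \leqslant \pi \}$. Because $|f(t) \cos n x| \leqslant |f(t)|$ is integrable over the square $[-\pi, \pi]^2$ (of finite measure, with $f \in L^1$), Fubini's theorem licenses the interchange of the $t$- and $x$-integrations; carrying out the now-inner $x$-integral over $[t, \pi]$ and using $\sin n \pi = 0$ collapses the expression to $-\tfrac{1}{n} \int_{-\pi}^{\pi} f(t) \sin n t \, \rd t = -\tfrac{\pi}{n} b_n = 0$. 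The same interchange applied to $\int_{-\pi}^{\pi} F(x) \sin n x \, \rd x$, this time using $\cos n \pi = (-1)^n$, yields a combination of $a_n$ and $a_0$, both zero. Hence every Fourier coefficient of $F$ of index $n \geqslant 1$ vanishes.

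It remains only to dispose of the constant term and to descend from $F$ back to $f$. Subtracting the mean value $c = \tfrac{1}{2 \pi} \int_{-\pi}^{\pi} F$ produces a continuous periodic function $G = F - c$ all of whose Fourier coefficients vanish; the continuous case then forces $G \equiv 0$, so $F$ is the constant $c$, and the endpoint value $F(-\pi) = 0$ shows $c = 0$ and $F \equiv 0$. Finally, $\int_{-\pi}^{x} f = 0$ for every $x$ gives $\int_a^b f = 0$ over every interval, hence $\int_U f = 0$ over every open set and, by regularity together with the absolute continuity of the integral, $\int_E f = 0$ over every measurable $E$; taking $E = \{ f > 0 \}$ and $E = \{ f < 0 \}$ forces $f = 0$ almost everywhere.

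I expect the only genuine obstacle to be the justification of the Fubini interchange and the careful bookkeeping of the boundary contributions $\sin n \pi = 0$ and $\cos n \pi = (-1)^n$ together with the stray constant term; each of these is routine. The conceptual point is that this last descent from $F$ to $f$, like the coefficient computation itself, rests only on first-course measure theory (regularity, the structure of open subsets of the line, absolute continuity of the integral) and nowhere on the Lebesgue differentiation theorem.
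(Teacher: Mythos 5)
Your proposal is correct and follows essentially the same route as the paper: both replace the integration-by-parts step with a Fubini interchange over the triangle to compute the Fourier coefficients of the indefinite integral $F$, then subtract the mean, invoke the continuous case, and descend from $F \equiv 0$ to $f = 0$ a.e.\ without appealing to differentiation theory. The only differences are cosmetic — you base the integral at $-\pi$ and assume the vanishing of the coefficients from the outset (so the correction term $-\tfrac{1}{2}a_0 t$ disappears), whereas the paper bases it at $0$ and records the general relations $A_n = -b_n/n$, $B_n = a_n/n$ before specializing.
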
 

\begin{proof} 
As was discussed above, we shall assume the truth of the corresponding statement for a continuous function. 

\medbreak 

Let $f \in L^1 [- \pi, \pi]$ but temporarily make no assumption regarding the vanishing of  its Fourier coefficients. Introduce the continuous function $F$ defined by 
$$F(t) = \int_0^t f(u) \; {\rm d} u - \frac{1}{2} a_0 t$$
(an indefinite integral of $f$ if $a_0$ vanishes) and note that $F( - \pi) = F( \pi )$ by direct calculation. We claim that the Fourier coefficients of $F$ satisfy 
$$A_n := \frac{1}{\pi} \int_{- \pi}^{\pi} F(t) \cos  n t \: {\rm d} t = -\frac{1}{n} b_n \; \; \; \; (n > 0)$$
and 
$$B_n := \frac{1}{\pi} \int_{- \pi}^{\pi} F(t) \sin  n t \; {\rm d} t = \frac{1}{n} a_n \; \; \; \; (n > 0).$$
\medbreak 
\noindent 
To justify this, we apply the Fubini theorem. The function $(t, u) \mapsto f(u) \cos n t$ is integrable over the square $[ - \pi, \pi ] \times [ - \pi, \pi ]$: inverting the order of integration over the indicated triangular subsets,  
\begin{eqnarray*}
\int_0^{\pi} \Big\{ \int_0^t f(u) \; {\rm d} u \Big\} \cos n t \; {\rm d} t & = & \int_0^{\pi} f(u) \; \Big\{ \int_u^{\pi} \cos n t \; {\rm d} t \Big\} \; {\rm d} u \\ & = & \int_0^{\pi} f(u) \Big[ \frac{\sin n t}{n} \Big]_u^{\pi} {\rm d} u
\end{eqnarray*}
so that 
$$\int_0^{\pi} \Big\{ \int_0^t f(u) \; {\rm d} u \Big\} \cos n t \; {\rm d} t = - \frac{1}{n} \int_0^{\pi} f(u) \sin n u \; {\rm d} u$$
and similarly 
$$\int_{-\pi}^0 \Big\{ \int_0^t f(u) \; {\rm d} u \Big\} \cos n t \; {\rm d} t = - \frac{1}{n} \int_{-\pi}^0 f(u) \sin n u \; {\rm d} u;$$
since 
$$\int_{-\pi}^{\pi}  \frac{1}{2} a_0 t \cos n t \; {\rm d} t = 0$$
it follows upon summation that 
$$A_n = - \frac{1}{n} b_n$$
as claimed. The calculation to establish that 
$$B_n = \frac{1}{n} a_n$$
is similar, being complicated only by the fact that the integral 
$$\int_{-\pi}^{\pi} \frac{1}{2} a_0 t \sin n t \; {\rm d} t = (-1)^{n - 1} \frac{\pi}{n} a_0$$
is precisely cancelled by a corresponding term in the integral 
$$\int_{-\pi}^{\pi} \Big\{ \int_0^t f(u) \; {\rm d} u \Big\} \sin n t \; {\rm d} t$$
on account of the definition
$$a_0 = \frac{1}{\pi} \int_{-\pi}^{\pi} f.$$
The function $F$ thus has Fourier coefficients $A_0$ (upon which we need not elaborate here), $A_n = - b_n/n$ and $B_n = a_n/n$ (for $n > 0$). 

\medbreak 

Now let us assume that the Fourier coefficients of $f$ vanish. It follows that the {\it continuous} function $F - \frac{1}{2} A_0$ has vanishing Fourier coefficients and is therefore identically zero; as $F(0) = 0$ it follows that $A_0 = 0$ and therefore that $F$ is identically zero. Thus $f$ has vanishing integral over each interval with $0$ as an endpoint and hence over each interval; so $f$ is zero almost everywhere. Of course, the last conclusion here may be drawn without reference to the relationship between integrals and derivatives. 

\end{proof} 

\medbreak 

Incidentally, if we prefer to follow the classics in proving the uniqueness theorem via partial integration, it is still possible to bypass the more sophisticated aspects of Lebesgue theory that relate integration and differentiation: we need only use a version of integration by parts that does not involve differentiation explicitly; such a version appears as Proposition 2 on page 103 of [6] and as the first theorem in section 26 on pages 54-55 of the classic [5]. On this point it is interesting to note that, in their proofs of the uniqueness theorem, neither Zygmund nor Hardy and Rogosinski seem to specify any  particular version of integration by parts. 

\medbreak 

Finally, it is perhaps worth mentioning that Hardy and Rogosinski [4] actually indicate several proofs of the uniqueness theorem: the original proof to which ours is an alternative, at page 18; at page 31, a variant based on termwise integration of Fourier series (our alternative line of argument being so arranged as to allow for its use here, too); at page 43, a variant making use of the fact that indefinite integrals are functions of bounded variation; at page 68, the elegant proof via Ces\`aro means. 

\bigbreak

\begin{center} 
{\small R}{\footnotesize EFERENCES}
\end{center} 
\medbreak 

[1] N. Bary, {\it A Treatise on Trigonometric Series}, Volume 1, Pergamon Press (1964). 

\medbreak 

[2] S.B. Chae, {\it Lebesgue Integration}, Second Edition, Springer-Verlag (1995). 

\medbreak 

[3] R.E. Edwards, {\it Fourier Series - A Modern Introduction}, Volume 1, Revised Edition, Springer-Verlag (1979). 

\medbreak 

[4] G.H. Hardy and W.W. Rogosinski, {\it Fourier Series}, Cambridge University Press (1944); Dover Publications (1999). 

\medbreak 

[5] F. Riesz and B. Sz.-Nagy, {\it Functional Analysis}, Frederick Ungar Publishing Company (1955); Dover Publications (1990). 

\medbreak 

[6] A.J. Weir, {\it Lebesgue Integration and Measure}, Cambridge University Press (1973). 

\medbreak 

[7] A. Zygmund, {\it Trigonometric Series}, Volume 1, Second Edition, Cambridge University Press (1959). 

\medbreak

\end{document}